\documentclass[a4paper,10pt]{amsart}
\usepackage[utf8x]{inputenc}
\usepackage{amsmath}
\usepackage{amsthm}
\usepackage{amssymb}
%opening
\usepackage{hyperref}
\usepackage{tikz}
\usepackage{amsfonts}
\usepackage[framemethod=TikZ]{mdframed}
\usetikzlibrary{positioning,arrows}
\usetikzlibrary{matrix}
\usetikzlibrary{shapes,snakes}
\usetikzlibrary{arrows,decorations.pathreplacing,patterns}

\mdfdefinestyle{Example}{%
    linecolor=blue,
    outerlinewidth=1pt,
    roundcorner=2pt,
    innertopmargin=\baselineskip,
    innerbottommargin=\baselineskip,
    innerrightmargin=20pt,
    innerleftmargin=20pt,
    backgroundcolor=gray!25!white}
    
\newtheorem{theorem}{Theorem}[section]

\newtheorem{definition}[theorem]{Definition}

\newtheorem*{theorem*}{Theorem}
\newtheorem*{corollary*}{Corollary}

\definecolor{darkblue}{rgb}{0.0,0,0.7} % darkblue color
\newcommand{\darkblue}{\color{darkblue}} % darkblue command
\newcommand{\defn}[1]{\emph{\darkblue #1}}

\def\C{\mathbb{C}}

% \allowdisplaybreaks
\begin{document}

\title[A new cyclic sieving phenomenon for Catalan objects]{A new cyclic sieving phenomenon for Catalan objects}
\author{Marko Thiel}
% \ead{marko.thiel@univie.ac.at}
\address{Department of Mathematics, University of Zürich, Winterthurerstrasse 190, 8050 Zürich, Switzerland}

\begin{abstract} Based on computational experiments, Jim Propp and Vic Reiner suspected that there might exist a sequence of combinatorial objects $X_n$, each carrying a natural action of the cyclic group $C_{n-1}$ of order $n-1$ such that the triple $\left(X_n,C_{n-1},\frac{1}{[n+1]_q}{2n \brack n}_q\right)$ exhibits the cyclic sieving phenomenon.
We prove their suspicion right.
\end{abstract}

\maketitle
\section{Introduction}
\subsection{The Cyclic Sieving Phenomenon}
Reiner, Stanton and White have observed that the following situation often occurs: one has a combinatorial object $X$, a cyclic group $C$ that acts on $X$ and a ``nice'' polynomial $X(q)$ whose evaluations at $|C|$-th roots of unity encode the cardinalities of the fixed point sets of the elements of $C$ acting on $X$. They termed this the cyclic sieving phenomenon.
\begin{definition}[\protect{\cite{reiner04cyclic}}]
 Let $X$ be a finite set carrying an action of a cyclic group $C$ and let $X(q)$ be a polynomial in $q$ with nonnegative integer coefficients.
 Fix an isomorphism $\omega$ from $C$ to the set of $|C|$-th roots of unity, that is an embedding $\omega:C\hookrightarrow\C^*$.
 We say that the triple $(X,C,X(q))$ exhibits the \defn{cyclic sieving phenomenon} (CSP) if
 \[|\{x\in X:c(x)=x\}|=X(q)_{q=\omega(c)}\text{ for every }c\in C.\]
\end{definition}
\noindent
In particular, if $(X,C,X(q))$ exhibits the CSP then $|X|=X(1)$. So $X(q)$ is a \defn{$q$-analogue} of $|X|$.
% Given a product formula for $|X|$, a natural way to produce a $q$-analogue is to replace every integer $n$ by the $q$-integer 
\subsection{Catalan numbers}
One of the most famous number sequences in combinatorics is the sequence $1,1,2,5,14,42,132,\ldots$ of \defn{Catalan numbers} given by the formula
\[C_n:=\frac{1}{n+1}\binom{2n}{n}.\]
A vast variety of combinatorial objects are counted by the Catalan number $C_n$, for example the set of triangulations of a convex $(n+2)$-gon and the set of noncrossing matchings of $\{1,2,\ldots,2n\}$.
The (MacMahon) \defn{$q$-Catalan number} $C_n(q)$ is the natural $q$-analogue of $C_n$, defined as
\[C_n(q):=\frac{1}{[n+1]_q} {2n \brack n}_q,\]
where $[n]_q:=1+q+q^2+\ldots+q^{n-1}$, $[n]_q!:=[1]_q[2]_q\cdots[n]_q$ and ${n \brack k}_q:=\frac{[n]_q!}{[n-k]_q![k]_q!}$.
It is a polynomial in $q$ with nonnegative integer coefficients.\\
\\
The $q$-Catalan number has the distinction of occurring in two entirely different CSPs for Catalan objects:
\begin{theorem}[\protect{\cite[Theorem 7.1]{reiner04cyclic}}]
 Let $\Delta_n$ be the set of triangulations of a convex $(n+2)$-gon and let $C_{\Delta_n}$ be the cyclic group of order $n+2$ acting on $\Delta_n$ by rotation.
 Then $(\Delta_n,C_{\Delta_n},C_n(q))$ exhibits the cyclic sieving phenomenon.
\end{theorem}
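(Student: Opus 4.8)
The plan is to verify the defining identity of the CSP,
\[
\bigl|\{t\in\Delta_n:c(t)=t\}\bigr|=C_n(q)_{q=\omega(c)},
\]
for one rotation $c$ of each order $d$ dividing $n+2$. This suffices: the left-hand side depends only on the subgroup $\langle c\rangle$, and the numbers $C_n(q)_{q=\omega(c)}$ attached to the different generators of a fixed such subgroup are Galois conjugate (as $C_n(q)\in\Z[q]$) and hence all equal, once we have shown — as the computation below will — that each of them is an integer. In particular $C_n(q)_{q=1}=C_n=|\Delta_n|$ disposes of $d=1$.

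For the polynomial side I would start from $C_n(q)=\dfrac{[2n]_q!}{[n]_q!\,[n+1]_q!}=\displaystyle\prod_{k=2}^{n}\frac{[n+k]_q}{[k]_q}$ and pick a primitive $d$-th root of unity $\zeta$ with $2\le d\mid n+2$. Since $[j]_\zeta=0$ precisely when $d\mid j$, and since $n\equiv-2\pmod d$, a short count over the residues $k\equiv0$ and $k\equiv2\pmod d$ shows that, with $m:=(n+2)/d$, the numerator of the product acquires $m-1$ vanishing factors and the denominator $m-1$ of them when $d\in\{2,3\}$, whereas the numerator acquires $m$ and the denominator $m-1$ when $d\ge4$. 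Hence $C_n(\zeta)=0$ for $d\ge4$; and for $d\in\{2,3\}$ the vanishing factors cancel, so evaluating the remaining finite expression via the $q$-Lucas congruence for $q$-binomial coefficients (which rewrites ${2n\brack n}_\zeta$ as an ordinary binomial coefficient times a $q$-binomial coefficient in the residues modulo $d$), together with the fact that $[n+1]_\zeta\ne0$, yields
\[
C_n(\zeta)=\begin{cases}
\binom{n}{n/2}, & d=2,\\
\binom{2(n-1)/3}{(n-1)/3}, & d=3.
\end{cases}
\]

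For the combinatorial side, fix the rotation $c$ of order $d$ and let $O$ be the centre of the $(n+2)$-gon. In any $c$-invariant triangulation there is a unique cell — a diagonal or a triangle — whose relative interior contains $O$, and $c$ must fix it as a set. A diagonal can be invariant under a nontrivial rotation only if that rotation is the half-turn, and a triangle only under a rotation of order $3$ (which then cyclically permutes its vertices); so for $d\ge4$ there is no $c$-invariant triangulation and both sides vanish. For $d=2$ (so $n+2=2m$) a $c$-invariant triangulation is determined by its central diameter — one of the $m$ diagonals joining antipodal vertices — together with an arbitrary triangulation of the $(m+1)$-gon it cuts off on one side (the other side being forced by $c$), which gives $m\,C_{m-1}=\binom{2m-2}{m-1}=\binom{n}{n/2}$ of them. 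For $d=3$ (so $n+2=3m$) it is determined by its central triangle — one of the $m$ triangles of the form $\{i,i+m,i+2m\}$ — together with an arbitrary triangulation of one of the three congruent $(m+1)$-gons it cuts off, giving again $m\,C_{m-1}=\binom{2m-2}{m-1}=\binom{2(n-1)/3}{(n-1)/3}$. Matching these counts with the preceding paragraph finishes the proof.

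I expect the genuine work to be the extraction of the finite value of $C_n(\zeta)$ when $d=2$ or $d=3$: at these roots of unity $\prod_{k=2}^{n}[n+k]_q/[k]_q$ is a $0/0$ indeterminacy, so one has to pair off the vanishing factors correctly (or apply $q$-Lucas while carefully tracking the residues $n\bmod d$ and $2n\bmod d$) in order to land on exactly the right binomial coefficient. The geometric fixed-point count — locating the central cell and folding the triangulation onto a single fundamental domain — is elementary by comparison, and the vanishing for $d\ge4$ is transparent on both sides.
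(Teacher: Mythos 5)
The paper offers no proof of this statement---it is quoted from Reiner--Stanton--White \cite[Theorem 7.1]{reiner04cyclic}---so there is no internal argument to compare against; judged on its own, your proof is correct, and it follows the same strategy the paper uses for its main theorem, namely computing both sides of the CSP identity at each root of unity. The reduction to one element of each order $d\mid n+2$ is legitimate: all elements of a given order in a cyclic group generate the same subgroup (hence have the same fixed-point set), and the corresponding evaluations of $C_n(q)$ are Galois conjugates, so they coincide once one of them is shown to be rational. Your zero counts in $\prod_{k=2}^{n}[n+k]_q/[k]_q$ are right ($m$ numerator zeros against $m-1$ denominator zeros for $d\geq4$, and $m-1$ against $m-1$ for $d\in\{2,3\}$), and the $q$-Lucas step does yield the stated values: for $d=3$ one gets ${2n\brack n}_\zeta=\binom{2(n-1)/3}{(n-1)/3}(1+\zeta)$, and the factor $1+\zeta$ cancels exactly against $[n+1]_\zeta=1+\zeta$. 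The geometric count is also sound: the unique cell whose relative interior contains the centre must be carried to itself, which is impossible for a rotation of order $d\geq4$ and forces a diameter for $d=2$ and a central triangle $\{i,i+m,i+2m\}$ for $d=3$; one fundamental $(m+1)$-gon then carries a free triangulation, giving $mC_{m-1}=\binom{2m-2}{m-1}$ with no overcounting, since two diameters (or two central triangles) would cross. The only part you leave schematic is the $q$-Lucas evaluation itself; it is routine, but it is exactly where the residues $n\equiv-2$, $n+1\equiv-1$, $2n\equiv-4\pmod d$ enter, so it deserves to be written out in a final version.
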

\begin{theorem}[\protect{\cite[Theorems 1.4 and 1.5]{petersen09promotion}}]\label{matching}
 Let $M_n$ be the set of noncrossing matchings of $[2n]:=\{1,2,\ldots,2n\}$ and let $C_{M_n}$ be the cyclic group of order $2n$ acting on $M_n$ by rotation.
 Then $(M_n,C_{M_n},C_n(q))$ exhibits the cyclic sieving phenomenon.
\end{theorem}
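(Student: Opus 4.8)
The plan is to transport the question to standard Young tableaux of rectangular shape, where an action by jeu-de-taquin \emph{promotion} is known to exhibit the cyclic sieving phenomenon. There is a classical bijection $\Phi\colon M_n\to\mathrm{SYT}(n,n)$, with $\mathrm{SYT}(n,n)$ the set of standard Young tableaux of the $2\times n$ rectangular shape $(n,n)$: a matching of $[2n]$ determines a balanced parenthesisation of $[2n]$ by calling position $i$ an \emph{opener} if $i$ is the smaller element of its arc and a \emph{closer} otherwise; placing the openers in the first row and the closers in the second row, with entries increasing along each row, always yields a standard Young tableau of shape $(n,n)$, and restricting this assignment to noncrossing matchings makes it a bijection. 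First I would set up $\Phi$ precisely and describe its inverse (reconstruct the unique noncrossing matching from an opener/closer pattern by the bracket-matching rule).

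The heart of the matter is to show that, under $\Phi$, rotation of a matching corresponds to promotion: if $r\in C_{M_n}$ is rotation by one step and $\mathrm{pr}$ is promotion on $\mathrm{SYT}(n,n)$, then $\Phi\circ r=\mathrm{pr}\circ\Phi$ (possibly after replacing $r$ by $r^{-1}$, which is harmless since the CSP is insensitive to this choice, $C_n(q)$ having real coefficients). I expect this equivariance to be the main obstacle. One way to get it is to view noncrossing matchings as $\mathfrak{sl}_2$-webs and argue that rotating the boundary of a web corresponds, through the Khovanov--Kuperberg reading algorithm that underlies $\Phi$, exactly to promotion on the associated tableau. For the two-row case one can also argue directly: track how deleting the box labelled $1$, performing the forward jeu-de-taquin slide, and refilling the vacated corner with the new maximum interacts with the opener/closer bookkeeping under $\Phi$. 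Either way this is a finite, local verification once $\Phi$ is in hand.

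Granting the equivariance, the theorem follows from the cyclic sieving phenomenon for promotion on rectangular tableaux (Rhoades): for a rectangular shape $\lambda$ with $N$ boxes, the group $\mathbb{Z}/N\mathbb{Z}$ acts on $\mathrm{SYT}(\lambda)$ with a chosen generator acting by promotion -- well defined since $\mathrm{pr}^N=\mathrm{id}$ for rectangular shapes -- and the triple $\bigl(\mathrm{SYT}(\lambda),\mathbb{Z}/N\mathbb{Z},f^\lambda(q)\bigr)$ exhibits the CSP, where $f^\lambda(q)=\dfrac{[N]_q!}{\prod_{c\in\lambda}[h(c)]_q}$ is the natural $q$-analogue of $|\mathrm{SYT}(\lambda)|$. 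It remains to match this with our setup for $\lambda=(n,n)$. Here $N=2n=|C_{M_n}|$ and $r^{2n}=\mathrm{id}$, so the two $\mathbb{Z}/2n\mathbb{Z}$-actions correspond under $\Phi$; and $f^{(n,n)}(q)=C_n(q)$ exactly, with no stray power of $q$, which is immediate from the hook lengths of $(n,n)$, namely the multiset $\{n+1,n,\dots,2\}\cup\{n,n-1,\dots,1\}$, giving $f^{(n,n)}(q)=\frac{[2n]_q!}{[n+1]_q!\,[n]_q!}=\frac{1}{[n+1]_q}{2n\brack n}_q=C_n(q)$. Combining the transported action with this identification yields the asserted CSP for $(M_n,C_{M_n},C_n(q))$.

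For completeness I record a more hands-on alternative avoiding promotion: evaluate $C_n(q)$ at each $2n$-th root of unity in closed form via the $q$-Lucas theorem and the Reiner--Stanton--White specialisations of $q$-binomial coefficients (handling by a limiting argument the degenerate cases where $[n+1]_q$ vanishes), then separately count the noncrossing matchings of $[2n]$ fixed by each power of $r$ using their forced rotational symmetry, and check the two families of numbers agree. This is elementary but computationally heavier, so the promotion argument is the one I would write up.
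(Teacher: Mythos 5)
The paper offers no proof of this statement --- it is imported verbatim from Petersen--Pylyavskyy--Rhoades and used as a black box in the proof of Theorem \ref{main} --- so there is nothing internal to compare your argument against. Your sketch is, in outline, exactly the argument of the cited source: the opener/closer bijection $\Phi\colon M_n\to\mathrm{SYT}(n,n)$, the equivariance of rotation with promotion (the substantive step, which is what that paper proves via the $\mathfrak{sl}_2$-web formalism), and the hook-length computation $f^{(n,n)}(q)=\frac{[2n]_q!}{[n+1]_q!\,[n]_q!}=C_n(q)$, which you carry out correctly. One caution: Petersen--Pylyavskyy--Rhoades actually establish the CSP on the matching/web side by representation theory and then transport it \emph{to} tableaux, so quoting them for the tableau-promotion CSP and transporting back would be circular; quoting Rhoades's later theorem for general rectangular shapes, whose Kazhdan--Lusztig-theoretic proof is independent of matchings, avoids this, and that is the reference you name. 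With that citation your sketch is sound; your ``hands-on alternative'' (root-of-unity evaluation plus direct counting of rotationally symmetric noncrossing matchings) is also viable and is in the same spirit as the computation this paper carries out for its new $C_{n-1}$-action.
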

\noindent
Computational experiments by Jim Propp and Vic Reiner suggested that substituting an $(n-1)$-th root of unity into $C_n(q)$ always yields a positive integer.
So they suspected that there might also be cyclic sieving phenomenon involving $C_n(q)$ and a cyclic group of order $n-1$.
The main result of this note proves that their suspicion is correct.
\begin{theorem}\label{main}
 For any $n>0$, there exists an explicit set $X_n$ that carries an action of the cyclic group $C_{X_n}$ of order $n-1$ such that the triple $(X_n,C_{X_n},C_n(q))$ exhibits the cyclic sieving phenomenon.
\end{theorem}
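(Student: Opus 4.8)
The plan is to reduce Theorem \ref{main} to a root-of-unity computation for $C_n(q)$ together with a combinatorial construction. Recall the standard reformulation of the cyclic sieving phenomenon: once a finite set $X_n$ carrying an action of $C_{n-1}$ has been chosen, the triple $(X_n,C_{n-1},C_n(q))$ exhibits the CSP if and only if $|X_n|=C_n(1)=C_n$ and, for every divisor $d>1$ of $n-1$, the number of elements of $X_n$ fixed by an element of order $d$ equals $C_n(\zeta_d)$, where $\zeta_d$ is a primitive $d$-th root of unity. (For this to be possible at all the value $C_n(\zeta_d)$ must be a nonnegative rational integer, in particular independent of the choice of primitive root; the computation below confirms it.) So there are two tasks: compute all the numbers $C_n(\zeta_d)$, and then produce an explicit $C_{n-1}$-set $X_n$ of cardinality $C_n$ that realises exactly those fixed-point counts.

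For the first task I would start from the identity
\[C_n(q)=\frac{1}{[n+1]_q}{2n \brack n}_q=\frac{[2n]_q!}{[n+1]_q!\,[n]_q!}=\frac{1}{[n]_q}{2n \brack n-1}_q.\]
Since $d\mid n-1$ we have $n\equiv1\pmod d$, hence $\zeta_d^{\,n}=\zeta_d$ and $[n]_{\zeta_d}=(\zeta_d-1)/(\zeta_d-1)=1$, so $C_n(\zeta_d)={2n \brack n-1}_{\zeta_d}$. Now apply the $q$-Lucas theorem for Gaussian binomials at a primitive $d$-th root of unity, ${a \brack b}_{\zeta_d}=\binom{\lfloor a/d\rfloor}{\lfloor b/d\rfloor}{a \bmod d \brack b \bmod d}_{\zeta_d}$, and write $n-1=kd$. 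For $d\ge3$ one has $2n=2kd+2$ with $2<d$, whence $C_n(\zeta_d)=\binom{2k}{k}=\binom{2(n-1)/d}{(n-1)/d}$; for $d=2$ one gets instead the exceptional value $C_n(-1)=\binom{n}{(n-1)/2}$. Together with $C_n(1)=C_n$, this tells us exactly which central-binomial-type numbers the fixed-point sets of $X_n$ must have.

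The second task is the creative core. I would look for a model of $C_n$ built from $n-1$ cyclically arranged pieces with $C_{n-1}$ acting by rotating the pieces, designed so that a configuration fixed by a rotation of order $d$ is forced to be periodic with $(n-1)/d$ pieces per period, each period recording a free choice equivalent to a lattice path from $(0,0)$ to $\bigl((n-1)/d,(n-1)/d\bigr)$; this would reproduce the count $\binom{2(n-1)/d}{(n-1)/d}$ for $d\ge3$. A useful guide is the set of $(n-1)$-element subsets of $[2(n-1)]$ under rotation by two positions: there the order-$d$ fixed-point number is exactly $\binom{2(n-1)/d}{(n-1)/d}$ for every $d$, but the total size is $\binom{2(n-1)}{n-1}$ rather than $C_n$ and the order-$2$ count is $\binom{n-1}{(n-1)/2}$ rather than $\binom{n}{(n-1)/2}$, so the genuine $X_n$ must be a different, though related, construction — presumably a necklace-type model cut out by a Catalan-flavoured condition that accounts for both discrepancies. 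If one is willing to settle for existence rather than a fully combinatorial description, $X_n$ can instead be produced from the structure theorem for cyclic sieving: a $C_{n-1}$-set realising $C_n(q)$ exists if and only if
\[C_n(q)\equiv\sum_{m\mid n-1}o_m\,\frac{q^{n-1}-1}{q^{(n-1)/m}-1}\pmod{q^{n-1}-1}\]
has a solution in nonnegative integers $o_m$, in which case one takes $X_n$ to be a disjoint union of $o_m$ copies of $C_{n-1}$ modulo its index-$m$ subgroup.

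The main obstacle is clearly this second step: exhibiting a genuinely combinatorial $X_n$ and showing its order-$d$ fixed sets biject with the lattice-path sets above, the case $d=2$ requiring separate attention. If one proceeds via the structure theorem instead, the remaining point is to check that the orbit multiplicities $o_m$ determined by the values from the first task are nonnegative integers; this is automatic once an actual $C_{n-1}$-set with the correct fixed-point counts is in hand, which is one more reason to aim for the explicit construction. Once the fixed-point identities are in place, the matching of the coefficient expansion of $C_n(q)$ modulo $q^{n-1}-1$ is a formality, so nothing further is needed there.
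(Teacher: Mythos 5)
Your reduction of the CSP to the evaluations $C_n(\zeta_d)$ for divisors $d$ of $n-1$ is sound, and your computation of those values via $[n]_{\zeta_d}=1$ and the $q$-Lucas theorem is correct and agrees with the paper: $C_n(1)=C_n$, $C_n(-1)=\binom{n}{(n-1)/2}$ when $2\mid n-1$, and $C_n(\zeta_d)=\binom{2(n-1)/d}{(n-1)/d}$ for $d\geq 3$. (The paper obtains the same three values from the identities $C_n(q)=\frac{1}{[n]_q}{2n \brack n+1}_q$ and $C_n(q)=\frac{[2n]_q}{[n]_q[n+1]_q}{2n-1 \brack n}_q$ together with Proposition 4.2 of Reiner--Stanton--White, so this half of your argument is a legitimate alternative route.) But this is only half of the theorem, and you have left the other half --- actually producing $X_n$ --- as an acknowledged open step, so the proposal as written is a plan rather than a proof.

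Concretely, two things are missing. First, the explicit set: the paper takes $X_n$ to be the noncrossing $(1,2)$-configurations of $[n-1]$, i.e.\ collections of pairwise disjoint singletons (``balls'') and noncrossing two-element subsets (``arcs''), with $C_{n-1}$ acting by rotation; the count of configurations fixed by $g^k$ (where $dk=n-1$) is then obtained by choosing which points of the fundamental domain $[k]$ are covered by arcs, invoking the known CSP for noncrossing matchings (Theorem \ref{matching}) to count the $g^k$-invariant noncrossing matchings on the arc-covered points, and finishing with binomial identities. Your ``necklace-type model cut out by a Catalan-flavoured condition'' is the right instinct but never materialises into an object. Second, your fallback via the structure theorem is circular as stated: you observe that nonnegativity and integrality of the orbit multiplicities $o_m$ ``is automatic once an actual $C_{n-1}$-set with the correct fixed-point counts is in hand,'' but producing such a set is precisely what is at issue. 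To make the fallback self-contained you would have to verify directly, by M\"obius inversion from the values $\binom{2(n-1)/d}{(n-1)/d}$ and $\binom{n}{(n-1)/2}$, that each $o_m$ is a nonnegative integer --- a concrete arithmetic claim you neither state nor prove. Either gap alone leaves the theorem unestablished.
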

\section{Proof of Theorem \ref{main}}
The first order of business is to define the set $X_n$. 
Call a subset of $[m]$ a \defn{ball} if it has cardinality $1$ and an \defn{arc} if it has cardinality $2$.
Define a \defn{$(1,2)$-configuration} on $[m]$ as a set of pairwise disjoint balls and arcs.
Say that a $(1,2)$-configuration $F$ has a \defn{crossing} if it contains arcs $\{i_1,i_2\}$ and $\{j_1,j_2\}$ with $i_1<j_1<i_2<j_2$.
If $F$ has no crossing it is called \defn{noncrossing}.
\begin{figure}[h]
\begin{center}
 \resizebox*{7cm}{!}{\includegraphics{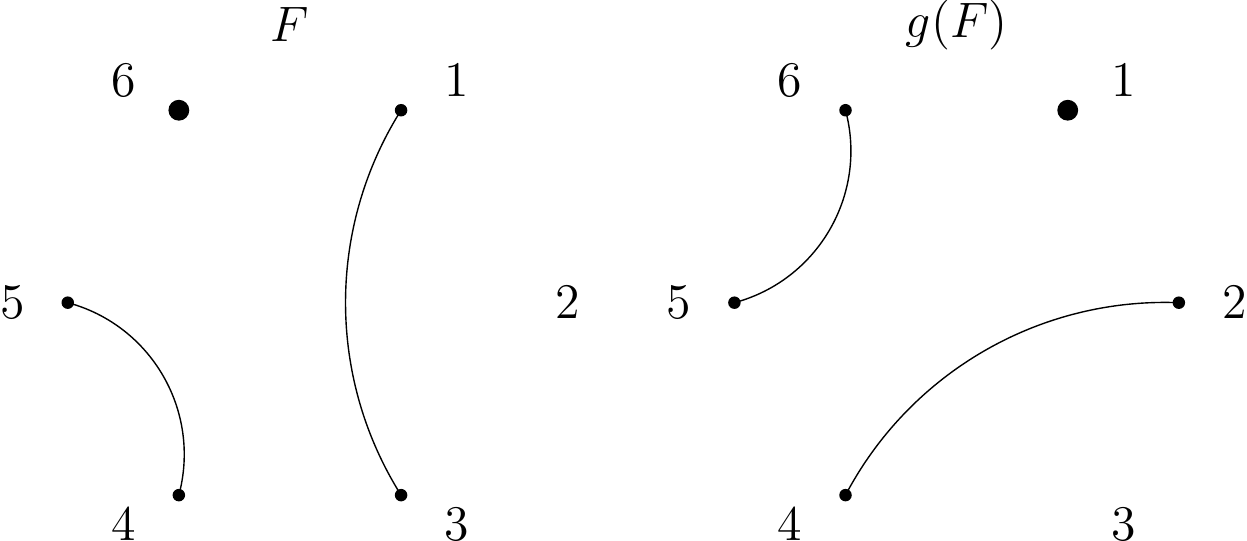}}\\
\end{center}
\caption{The noncrossing $(1,2)$-configuration $F=\{ \{1,3\},\{4,5\},\{6\} \}$ of $[6]$ and its rotation $g(F)$.}
\end{figure}\\
For $n>0$, define $X_n$ to be the set of noncrossing $(1,2)$-configurations of $[n-1]$. This is a corrected variant of $(\mathrm{e}^8)$ in Stanley's Catalan addendum \cite{stanleyaddendum}.
\begin{theorem}\label{cn}
 $|X_n|=C_n$ for all $n>0$.
 \begin{proof}
  To choose a noncrossing $(1,2)$-configuration $F$ of $[n-1]$, first pick the number $a$ of arcs in it.
  Then pick the subset $A$ of $[n-1]$ to be covered by arcs in one of $\binom{n-1}{2a}$ ways.
  Then choose a noncrossing matching of $A$ in one of $C_a=\frac{1}{a+1}\binom{2a}{a}$ ways.
  Finally choose the set of balls in $F$ from $[n-1]\backslash A$ in one of $2^{n-1-2a}$ ways.
  Thus
  \[|X_n|=\sum_{a\geq0}\binom{n-1}{2a}\frac{1}{a+1}\binom{2a}{a}2^{n-1-2a}=\frac{1}{n+1}\binom{2n}{n}.\]
  The last equality can be proven in many ways, for example using ``snake oil'' \cite{wilf06generating}.
 \end{proof}
\end{theorem}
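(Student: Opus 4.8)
The plan is to establish $|X_n| = C_n$ by a generating-function recursion rather than by evaluating a binomial sum directly. Write $a_m$ for the number of noncrossing $(1,2)$-configurations of $[m]$, with $a_0 = 1$ for the empty configuration, so that $|X_n| = a_{n-1}$; the claim becomes $a_m = C_{m+1}$ for all $m \ge 0$.

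The main step is a recursive decomposition of a noncrossing $(1,2)$-configuration $F$ of $[m]$, $m \ge 1$, according to the role of the point $1$: either $1$ is uncovered, or $\{1\}$ is a ball of $F$, or $1$ is the left endpoint of an arc $\{1,k\}$ with $2 \le k \le m$. In the first two cases the points $\{2,\dots,m\}$ carry an arbitrary noncrossing $(1,2)$-configuration, contributing $2a_{m-1}$. In the last case the noncrossing hypothesis forces every other arc to lie entirely within $\{2,\dots,k-1\}$ or entirely within $\{k+1,\dots,m\}$, so $F$ corresponds to the arc $\{1,k\}$ together with independent noncrossing configurations of those two intervals, contributing $\sum_{k=2}^{m} a_{k-2}\,a_{m-k}$. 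This gives $a_m = 2a_{m-1} + \sum_{j=0}^{m-2} a_j a_{m-2-j}$, equivalently the functional equation $f(x) = 1 + 2x f(x) + x^2 f(x)^2$ for $f(x) := \sum_{m \ge 0} a_m x^m$.

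It then remains to solve the quadratic and recognize the answer. Choosing the branch that is a power series with $f(0) = 1$ yields $f(x) = \frac{1 - 2x - \sqrt{1-4x}}{2x^2}$, and comparison with the Catalan generating function $C(x) = \sum_{n \ge 0} C_n x^n = \frac{1 - \sqrt{1-4x}}{2x}$ gives $x f(x) = C(x) - 1$; extracting coefficients yields $a_m = C_{m+1}$, hence $|X_n| = a_{n-1} = C_n$.

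The step demanding the most care is the claim in the arc case that $F$ is noncrossing if and only if both induced sub-configurations are: one checks that an arc meeting $\{2,\dots,k-1\}$ must be contained in it (else it crosses $\{1,k\}$), symmetrically for $\{k+1,\dots,m\}$, and that arcs on opposite sides of $\{1,k\}$ cannot cross. This is brief but is exactly where the combinatorial hypothesis enters; the remaining manipulations are routine. As an alternative I would keep in reserve a fully bijective argument: the same recursion can be realized as an explicit bijection of $X_n$ with another Catalan family --- for instance Dyck paths of semilength $n$, reading the arc through $1$ as a first-return decomposition --- but the generating-function route is shorter and avoids carrying the index shift by one through the bijection.
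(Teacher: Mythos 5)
Your argument is correct, but it takes a genuinely different route from the paper. The paper counts $X_n$ directly in one global step: choose the number $a$ of arcs, choose the $2a$ points they cover in $\binom{n-1}{2a}$ ways, choose a noncrossing matching of those points in $C_a$ ways (noncrossing depends only on relative order), and choose the balls among the remaining points in $2^{n-1-2a}$ ways, arriving at the identity $\sum_{a\geq0}\binom{n-1}{2a}\frac{1}{a+1}\binom{2a}{a}2^{n-1-2a}=\frac{1}{n+1}\binom{2n}{n}$, whose verification is outsourced to a standard technique. You instead use a first-return decomposition at the point $1$, derive the recursion $a_m=2a_{m-1}+\sum_{j=0}^{m-2}a_ja_{m-2-j}$, and solve the resulting quadratic functional equation to get $xf(x)=C(x)-1$; your key combinatorial check (that an arc through $1$ separates the configuration, and that noncrossingness is equivalent to noncrossingness of the two induced pieces) is stated and is right. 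Your route is more self-contained, since it replaces the binomial-sum identity by an algebraic manipulation of generating functions. What the paper's decomposition buys, and what yours gives up, is equivariance: the global ``arc support, then noncrossing matching, then balls'' structure commutes with the rotation action of $C_{X_n}$, and the proof of the main theorem reuses exactly this decomposition to count $g^k$-fixed configurations (reducing to Theorem \ref{matching} for the invariant matchings). Your decomposition privileges the point $1$ and breaks the cyclic symmetry, so while it proves this enumeration cleanly, it would not adapt to the fixed-point counts needed later in the paper.
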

Define $C_{X_n}$ as the cyclic group of order $n-1$ acting on $[n-1]$ by cyclically permuting its elements.
The corresponding action of $C_{X_n}$ on the set of $(1,2)$-configurations on $[n-1]$ preserves crossings, so it restricts to an action on $X_n$.
\begin{proof}[Proof of Theorem \ref{main}]
 We proceed by direct computation. Let 
 \begin{align*}
  g:[n-1]&\rightarrow[n-1]\\
  i&\mapsto i+1\text{ if }i\neq n-1\\
  n-1&\mapsto 1
 \end{align*}
 be a generator of $C_{X_n}$ and let $\omega:g^k\mapsto e^{\frac{2\pi ik}{n-1}}$ be an embedding $C_{X_n}\hookrightarrow\C^*$.
 In order to show that 
 \begin{equation}\label{CSP}
  |\{x\in X_n:g^k(x)=x\}|=C_n(q)_{q=e^{\frac{2\pi ik}{n-1}}}\text{ for every }k
 \end{equation}
 we simply compute both sides. Without loss of generality, we may assume that $k$ divides $n-1$, say $dk=n-1$.\\
 \\
 First we compute the right-hand side of (\ref{CSP}).
 If $d=1$, it equals $C_n(1)=C_n$. If $d=2$, it equals $\binom{n}{\frac{n-1}{2}}$ using $C_n(q)=\frac{1}{[n]_q}{2n \brack n+1}_q$ and \cite[Proposition 4.2 (iii)]{reiner04cyclic}.
 If $d\neq 1,2$ it equals $\binom{2k}{k}$ using $C_n(q)=\frac{[2n]_q}{[n]_q[n+1]_q}{2n-1 \brack n}_q$ and \cite[Proposition 4.2 (iii)]{reiner04cyclic}.\\
 \\
 Next we compute the left-hand side of (\ref{CSP}).
 To choose a noncrossing $(1,2)$-configuration $F$ of $[n-1]$ that fixed by $g^k$, first pick the number $a$ of points in $[k]$ that are covered by arcs of $F$.
 Then pick the subset of $[k]$ covered by arcs of $F$ in one of $\binom{k}{a}$ ways.
 The $g^k$-invariance of $F$ then determines the entire subset $A$ of $[n-1]$ covered by arcs of $F$. In particular $|A|=da$.
 Next choose a $g^k$-invariant noncrossing matching of $A$. These are in natural bijection with the $c^{a}$-invariant noncrossing matchings of $[da]$ (where $c$ is the generator of the natural cyclic action on $[da]$).
 So using Theorem \ref{matching} their number is $C_{\frac{da}{2}}(q)_{q=e^{\frac{2\pi ia}{da}}}$ (taken to be $0$ if $da$ is odd).
 Finally, choose the balls of $F$ in $[k]$ in one of $2^{k-a}$ ways. By $g^k$-invariance these determine all the balls of $F$.
 Putting it all together we have
 \begin{equation}
  |\{x\in X_n:g^k(x)=x\}|=\sum_{a\geq0}\binom{k}{a}C_{\frac{da}{2}}(q)_{q=e^{\frac{2\pi ia}{da}}}2^{k-a}
 \end{equation}
 If $d=1$, then 
 \[|\{x\in X_n:g^k(x)=x\}|=\sum_{a\geq0}\binom{n-1}{2a}\frac{1}{a+1}\binom{2a}{a}2^{n-1-2a}=\frac{1}{n+1}\binom{2n}{n}\]
 as in Theorem \ref{cn}.\\
 \\
 Now consider the case $d>1$. If $2\mid a$, then 
 \[C_{\frac{da}{2}}(q)_{q=e^{\frac{2\pi ia}{da}}}=\binom{a}{\frac{a}{2}}\]
 using \cite[Proposition 4.2 (ii)]{reiner04cyclic}. If $2\nmid a$, then using ${2n\brack n}_q-{2n\brack n+1}_q=q^nC_n(q)$ and \cite[Proposition 4.2 (ii)]{reiner04cyclic} gives
 \[C_{\frac{da}{2}}(q)_{q=e^{\frac{2\pi ia}{da}}}=\binom{a}{\frac{a-1}{2}}\text{ if }d=2,\]
 and 
 \[C_{\frac{da}{2}}(q)_{q=e^{\frac{2\pi ia}{da}}}=0\text{ if }d>2.\]
 So we calculate that for $d=2$ we have 
 \begin{align*}
  &|\{x\in X_n:g^k(x)=x\}|\\
  &=\sum_{a\geq0}\binom{\frac{n-1}{2}}{2a}\binom{2a}{a}2^{\frac{n-1}{2}-2a}
  +\sum_{a\geq0}\binom{\frac{n-1}{2}}{2a+1}\binom{2a+1}{a}2^{\frac{n-1}{2}-2a-1}\\
  &=\binom{n}{\frac{n-1}{2}}.
 \end{align*}
 For $d>2$ we have 
 \[|\{x\in X_n:g^k(x)=x\}|=\sum_{a\geq0}\binom{k}{2a}\binom{2a}{a}2^{k-2a}=\binom{2k}{k}\]
 as required.
\end{proof}
\section{Acknowledgements}
The author wishes to thank Jim Propp and Vic Reiner for making him aware of their suspicions via the Dynamical Algebraic Combinatorics mailing list.

\bibliographystyle{alpha}
\bibliography{literature}

\begin{thebibliography}{RSW04}

\bibitem[PPR09]{petersen09promotion}
Kyle Petersen, Pasha Pylyavskyy, and Brendon Rhoades.
\newblock {Promotion and cyclic sieving via webs}.
\newblock {\em Journal of Algebraic Combinatorics}, 30:19--41, 2009.

\bibitem[RSW04]{reiner04cyclic}
Victor Reiner, Dennis Stanton, and Dennis White.
\newblock {The Cyclic Sieving Phenomenon}.
\newblock {\em Journal of Combinatorial Theory, Series A}, 108:17--50, 2004.

\bibitem[Sta]{stanleyaddendum}
Richard~P. Stanley.
\newblock {Catalan Addendum. New Problems for Enumerative Combinatorics, Vol.
  2}.
\newblock \url{http://math.mit.edu/\textasciitilde rstan/ec/catadd.pdf}.

\bibitem[Wil06]{wilf06generating}
Herbert~S. Wilf.
\newblock {\em {generatingfunctionology}}.
\newblock A. K. Peters, Ltd., Natick, 2006.

\end{thebibliography}

\end{document}